
\documentclass{amsart}
\usepackage{amsfonts,amssymb,amsmath}

\setcounter{MaxMatrixCols}{10}

\newtheorem{theorem}{Theorem}
\newtheorem{acknowledgement}[theorem]{Acknowledgement}

\newtheorem{corollary}[theorem]{Corollary}

\newtheorem{lemma}[theorem]{Lemma}

\begin{document}
\title{On idempotent stable range one matrices}
\author{Grigore C\u{a}lug\u{a}reanu, Horia F. Pop}
\thanks{Keywords: idempotent stable range 1, clean, coprime integers, $%
2\times 2$ matrix. MSC 2010 Classification: 16U99, 16U10, 15B33, 15B36,
16-04, 15-04}
\address{Babe\c{s}-Bolyai University, Cluj-Napoca, Romania}
\email{calu@math.ubbcluj.ro, hfpop@cs.ubbcluj.ro}

\begin{abstract}
We characterize the idempotent stable range one $2\times 2$ matrices over
commutative rings and in particular, the integral matrices with this
property. Several special cases and examples complete the subject.
\end{abstract}

\maketitle

\section{Introduction}

The idempotent stable range 1 for elements in a unital ring was introduced
in \cite{chen} and further studied in \cite{wan}.

\textbf{Definition}. An element $a$ of a ring $R$ is said to have \textsl{%
(left) stable range one} (\textsl{sr1}, for short) if for any $b\in R$, the
equality $Ra+Rb=R$ implies that $a+rb$ is a unit for some $r\in R$. If $r$
can be chosen to be an idempotent, we say that $a$ has \textsl{(left)
idempotent stable range one} (\textsl{isr1}, for short).

Actually, in \cite{chen}, the definition of a (right) idempotent stable
range 1 element was given requiring $a+br$ to be only left invertible, but
it was immediately proved this is equivalent with asking $a+br$ being a unit.

Since so far, left-right symmetry \emph{for elements} with sr1 is an open
question, we shall also consider it is open for isr1 elements, and discuss
in the sequel about \emph{left idempotent sr1 elements}. From the
definition, we derive directly that $a$ has \emph{left idempotent sr1} iff
for every $x,b\in R$ and $xa+b=1$, there is an idempotent $e\in R$, called 
\textsl{unitizer} (as in \cite{ca-po}), such that $a+eb$ is a unit.
Equivalently, for every $x\in R$, there is $e^{2}=e\in R$ such that $%
a+e(xa-1)$ is a unit. An element in a unital ring is called \textsl{clean}
if it is a sum of an idempotent and a unit, and \textsl{strongly clean} if
the idempotent and the unit commute.

Taking $e=0$ shows that \emph{units have }(not only sr1 but\emph{\ }also)%
\emph{\ isr1}. \emph{Zero} has trivially isr1 (take $e=1$).

Moreover, \emph{isr1 elements are clean} (just take $x=0$), but the converse
fails (see the starting example of the next section).

In Section 2, we specialize the characterization given in \cite{ca-po}, for
sr1, $2\times 2$ matrices over any commutative ring to idempotent sr1
matrices and show that for such matrices, this notion is left-right
symmetric. Next, we characterize the $2\times 2$ integral idempotent sr1
matrices together with some special cases, including idempotents, nilpotents
and matrices with zero second row. For the latter, "clean" and "idempotent
sr1" turn out to be equivalent.

The last section is dedicated to examples and comments.

For any unital ring $R$, $U(R)$ denotes the set of all the units and $%
\mathbb{M}_{2}(R)$ denotes the corresponding matrix ring. By $E_{ij}$ we
denote the square matrix having all entries zero, excepting the $(i,j)$
entry, which is $1$.

\section{$2\times 2$ idempotent stable range 1 matrices}

First notice that for any idempotent $e$, $2e$ is strongly clean: indeed, $%
2e=1+(2e-1)$ and $2e-1=(2e-1)^{-1}\in U(R)$.

Next we give an example of a \emph{strongly clean element which has not
idempotent sr1}. Namely, we show (directly from definition) that $2E_{11}$
has not idempotent sr1 in $\mathbb{M}_{2}(R)$, for any commutative ring $R$
such that $2\notin U(R)$ and $2R+1\nsubseteqq U(R)$ (e.g., $R=\mathbb{Z}$).
Suppose the contrary.

Let $a\in R$ such that $2a+1\notin U(R)$ and take $X=\left[ 
\begin{array}{cc}
a & 0 \\ 
0 & 0%
\end{array}%
\right] $. By left isr(1), there is an idempotent (unitizer) $E$, such that $%
2E_{11}+E(2XE_{11}-I_{2})\in U(\mathbb{M}_{2}(R))$. By computation

$\det (2E_{11}+E\left[ 
\begin{array}{cc}
2a-1 & 0 \\ 
0 & -1%
\end{array}%
\right] )\in U(R)$. Since the trivial idempotents $E=0_{2}$ and $E=I_{2}$
are not suitable (as for the latter, the determinant is $-(2a+1)$), we may
assume $E=\left[ 
\begin{array}{cc}
x & y \\ 
z & 1-x%
\end{array}%
\right] $ with $x(1-x)=yz$. Then $\det (2E_{11}+E(2XE_{11}-I_{2}))=$

\noindent $=\det \left[ 
\begin{array}{cc}
2+(2a-1)x & -y \\ 
(2a-1)z & x-1%
\end{array}%
\right] =2(x-1)\in U(R)$, a contradiction. Hence this is an example of \emph{%
(strongly) clean matrix that has not idempotent sr1}.

\bigskip

The important properties elements in Ring Theory may have, say, idempotent
or nilpotent or unit, each separately, \emph{is invariant under conjugations}
but not each (separately) is invariant under equivalences. Namely, \emph{%
idempotents and nilpotents are (separately) not invariant under equivalences}%
, but \emph{units}, and more generally, \emph{sr1 elements, are invariant
under equivalences}. A simple example is $E_{11}\left[ 
\begin{array}{cc}
0 & 1 \\ 
1 & 0%
\end{array}%
\right] =E_{12}$, that is, an idempotent is equivalent to a nilpotent.

Therefore, as noticed in \cite{ca-po}, for the determination of sr1 matrices
over elementary divisor rings, the \emph{diagonal reduction} is useful, but 
\emph{it is not, in the determination of} idempotents, or nilpotents or 
\emph{idempotent sr1 elements}.

In particular, the proof of the \emph{multiplicative closure for sr1 elements%
} (see \cite{che}), does specialize to idempotent sr1.

Indeed, an example showing that the set of all the idempotent sr1 elements 
\emph{is not} multiplicatively closed is given at the end of the paper.

\bigskip

First recall from \cite{ca-po}, the following characterization

\begin{theorem}
\label{lsr1}Let $R$ be a commutative ring and $A\in \mathbb{M}_{2}(R)$. Then 
$A$ has left stable range 1 iff for any $X\in \mathbb{M}_{2}(R)$ there
exists $Y\in \mathbb{M}_{2}(R)$ such that 
\begin{equation*}
\det (Y)(\det (X)\det (A)-\mathrm{Tr}(XA)+1)+\det (A(\mathrm{Tr}(XY)+1))-%
\mathrm{Tr}(A\mathrm{adj}(Y))
\end{equation*}%
is a unit of $R$. Here $\mathrm{adj}(Y)$ is the adjugate matrix.
\end{theorem}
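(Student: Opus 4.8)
The plan is to turn the stable range one condition into a single determinant condition and then expand that determinant by a $2\times 2$ polarization identity. First I would record the standard reduction of (left) stable range one: \emph{$A$ has left sr1 if and only if for every $X\in\mathbb{M}_2(R)$ there is $Y\in\mathbb{M}_2(R)$ with $A+Y(XA-I_2)\in U(\mathbb{M}_2(R))$.} One implication is clear because the pair $A,\,XA-I_2$ is left unimodular, as $X\cdot A+(-1)(XA-I_2)=I_2$; conversely, if $SA+TB=I_2$ then $TB=-(SA-I_2)$, and applying the special case to $X=S$ produces $Y_0$ with $A+Y_0(SA-I_2)$ a unit, i.e. $A+(-Y_0T)B$ a unit. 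Since $R$ is commutative, a matrix in $\mathbb{M}_2(R)$ is invertible exactly when its determinant lies in $U(R)$, so the whole condition becomes: \emph{for every $X$ there is $Y$ with $\det\!\big(A+Y(XA-I_2)\big)\in U(R)$.} It therefore suffices to prove that the displayed polynomial equals $\det\!\big(A+Y(XA-I_2)\big)$.

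For the expansion I would use the $2\times 2$ identity $\det(M+N)=\det M+\det N+\mathrm{Tr}\!\big(\mathrm{adj}(M)\,N\big)$, which follows from Cayley--Hamilton in the form $\mathrm{adj}(M)=\mathrm{Tr}(M)I_2-M$. Taking $M=A$ and $N=Y(XA-I_2)$ gives three pieces. The term $\det N=\det(Y)\det(XA-I_2)$, together with the evaluation $\det(XA-I_2)=\det(X)\det(A)-\mathrm{Tr}(XA)+1$ (again from the same identity applied to $XA$ and $-I_2$), produces the first summand $\det(Y)\big(\det(X)\det(A)-\mathrm{Tr}(XA)+1\big)$. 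For the cross term I would write $\mathrm{Tr}\!\big(\mathrm{adj}(A)\,Y(XA-I_2)\big)=\mathrm{Tr}\!\big(\mathrm{adj}(A)YXA\big)-\mathrm{Tr}\!\big(\mathrm{adj}(A)Y\big)$ and use cyclicity of the trace with $A\,\mathrm{adj}(A)=\det(A)I_2$ to collapse the first part to $\det(A)\,\mathrm{Tr}(XY)$, while $\mathrm{Tr}(\mathrm{adj}(A)Y)=\mathrm{Tr}(A\,\mathrm{adj}(Y))$ since both equal $\mathrm{Tr}(A)\mathrm{Tr}(Y)-\mathrm{Tr}(AY)$. Collecting the terms free of $\det(Y)$ gives $\det(A)+\det(A)\mathrm{Tr}(XY)-\mathrm{Tr}(A\,\mathrm{adj}(Y))=\det(A)\big(\mathrm{Tr}(XY)+1\big)-\mathrm{Tr}(A\,\mathrm{adj}(Y))$, which supplies the two remaining summands of the statement.

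The determinant expansion itself is mechanical, so I expect the only real obstacle to be the reduction step: verifying that it suffices to test complements of the precise form $XA-I_2$, and keeping the trace/adjugate rewriting honest (the identities $A\,\mathrm{adj}(A)=\det(A)I_2$, $\mathrm{adj}(A)=\mathrm{Tr}(A)I_2-A$ and $\mathrm{Tr}(\mathrm{adj}(A)Y)=\mathrm{Tr}(A\,\mathrm{adj}(Y))$ are special to $2\times 2$ matrices over a commutative ring). Once the polynomial is matched, both implications of the theorem follow at once, since ``unit for some $Y$'' on the determinant side is literally the reduced stable range one condition established in the first step.
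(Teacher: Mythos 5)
Your reduction and your expansion are both correct as algebra, and they follow what is surely the intended route (note that the paper itself contains \emph{no} proof of Theorem \ref{lsr1}: it is recalled from \cite{ca-po}, so there is no internal proof to compare against). The reduction to testing only the complements $XA-I_{2}$ is the same one the paper uses for isr1 elements ($a+e(xa-1)$), the determinant criterion for invertibility over a commutative ring is standard, and the polarization identity $\det (M+N)=\det (M)+\det (N)+\mathrm{Tr}(\mathrm{adj}(M)N)$ together with $\mathrm{Tr}(\mathrm{adj}(A)YXA)=\det (A)\mathrm{Tr}(XY)$ and $\mathrm{Tr}(\mathrm{adj}(A)Y)=\mathrm{Tr}(A\,\mathrm{adj}(Y))$ gives, correctly,
\begin{equation*}
\det \bigl(A+Y(XA-I_{2})\bigr)=\det (Y)\bigl(\det (X)\det (A)-\mathrm{Tr}(XA)+1\bigr)+\det (A)\bigl(\mathrm{Tr}(XY)+1\bigr)-\mathrm{Tr}(A\,\mathrm{adj}(Y)).
\end{equation*}

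The problem is your last step, where you assert that the collected terms ``supply the two remaining summands of the statement.'' They do not, under the paper's own reading of its formula. Your middle term is $\det (A)(\mathrm{Tr}(XY)+1)$, \emph{linear} in $\mathrm{Tr}(XY)+1$, whereas the statement's middle term is $\det \bigl(A(\mathrm{Tr}(XY)+1)\bigr)$, which the paper explicitly glosses, immediately after the theorem, as $(\mathrm{Tr}(XY)+1)^{2}\det (A)$ --- quadratic. These are different polynomials: for $A=X=Y=I_{2}$ one has $\det \bigl(A+Y(XA-I_{2})\bigr)=\det (I_{2})=1$, while the displayed expression of the theorem evaluates to $0+9-2=7$; in general the two readings differ by $\det (A)\,\mathrm{Tr}(XY)\bigl(\mathrm{Tr}(XY)+1\bigr)$. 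So either the printed statement carries a misplaced parenthesis and should read $\det (A)(\mathrm{Tr}(XY)+1)$ --- in which case your proof is a complete and correct proof of the corrected statement --- or, if the quadratic reading is insisted upon, your identification step fails and cannot be repaired by this strategy, since the quadratic expression is simply not $\det \bigl(A+Y(XA-I_{2})\bigr)$; establishing (or refuting) that version would need an entirely different argument. The evidence favors the typo explanation: every later use of the theorem in the paper (Theorem \ref{unu} onward) sets $\det (A)=0$, where the linear and quadratic forms coincide, so the discrepancy is invisible downstream. In your write-up you should flag this explicitly rather than silently matching your linear term to the printed quadratic one.
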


Notice that $\det (A(\mathrm{Tr}(XY)+1))=(\mathrm{Tr}(XY)+1)^{2}\det (A)$.

We obtain a \emph{characterization for idempotent sr1 matrices} just adding
the condition $Y^{2}=Y$.

As this was done (see \cite{ca-po}) for sr1, $2\times 2$ matrices, we obtain

\begin{corollary}
Let $R$ be a commutative ring and $A\in \mathbb{M}_{2}(R)$. Then $A$ has
left idempotent stable range 1 iff $A$ has right idempotent stable range 1.
\end{corollary}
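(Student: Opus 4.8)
The plan is to turn Theorem \ref{lsr1}, with the extra constraint $Y^{2}=Y$, into an explicit polynomial criterion and then to verify that this criterion does not see the difference between left and right. Writing
\[
\Phi(A,X,Y)=\det(Y)\bigl(\det(X)\det(A)-\mathrm{Tr}(XA)+1\bigr)+(\mathrm{Tr}(XY)+1)^{2}\det(A)-\mathrm{Tr}(A\,\mathrm{adj}(Y)),
\]
Theorem \ref{lsr1} together with the requirement that the witness be idempotent says that $A$ has left idempotent sr1 exactly when for every $X\in\mathbb{M}_{2}(R)$ there is an idempotent $Y$ with $\Phi(A,X,Y)\in U(R)$. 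This is the criterion I would work with, and the content of the corollary is that it is symmetric.

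The starting point is that, since $R$ is commutative, transposition $M\mapsto M^{T}$ is an anti-automorphism of $\mathbb{M}_{2}(R)$: it reverses products, fixes $I_{2}$, and hence sends units to units and idempotents to idempotents (from $E^{2}=E$ we get $(E^{T})^{2}=E^{T}$). Reading the definition through this anti-automorphism interchanges the left and right conditions while preserving idempotency of the unitizer, so $A$ has right idempotent sr1 if and only if $A^{T}$ has left idempotent sr1; concretely, $A+E(XA-I_{2})$ is a unit iff its transpose $A^{T}+(A^{T}X^{T}-I_{2})E^{T}$ is, and the latter is the defining form for right idempotent sr1 of $A^{T}$. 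Applying the criterion above to $A^{T}$, this matrix has left idempotent sr1 exactly when for every $X$ there is an idempotent $Y$ with $\Phi(A^{T},X,Y)\in U(R)$.

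The heart of the argument is to identify this last condition with ``$A$ has left idempotent sr1'', i.e. to show that $\Phi$ is invariant under the substitution $X\mapsto X^{T}$, $Y\mapsto Y^{T}$ — both bijections of $\mathbb{M}_{2}(R)$, the second restricting to a bijection of the idempotents. Using $\det(M^{T})=\det(M)$, $\mathrm{Tr}(M^{T})=\mathrm{Tr}(M)$, $\mathrm{adj}(M^{T})=\mathrm{adj}(M)^{T}$, together with $\mathrm{Tr}(X^{T}A^{T})=\mathrm{Tr}((AX)^{T})=\mathrm{Tr}(XA)$, $\mathrm{Tr}(X^{T}Y^{T})=\mathrm{Tr}(XY)$ and $\mathrm{Tr}(A^{T}\,\mathrm{adj}(Y^{T}))=\mathrm{Tr}\bigl((\mathrm{adj}(Y)A)^{T}\bigr)=\mathrm{Tr}(A\,\mathrm{adj}(Y))$, one checks term by term that $\Phi(A^{T},X^{T},Y^{T})=\Phi(A,X,Y)$. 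Since $X^{T},Y^{T}$ are just renamed bound variables, the conditions ``for all $X$ there is an idempotent $Y$ with $\Phi(A^{T},X,Y)\in U(R)$'' and ``for all $X$ there is an idempotent $Y$ with $\Phi(A,X,Y)\in U(R)$'' coincide, i.e. $A^{T}$ has left idempotent sr1 iff $A$ has left idempotent sr1. Chaining this with the previous step gives $A$ right isr1 $\iff A^{T}$ left isr1 $\iff A$ left isr1.

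The one delicate point I anticipate is the bookkeeping in this last computation: one must confirm that the existential quantifier over idempotent $Y$ really does transfer under $Y\mapsto Y^{T}$ (it does, because transposition permutes the idempotents) and that the adjugate term behaves as stated, since $\mathrm{adj}$ combines with transposition in the less familiar way $\mathrm{adj}(M^{T})=\mathrm{adj}(M)^{T}$. Everything else is the same determinant/trace verification already carried out for sr1 in \cite{ca-po}, and imposing $Y^{2}=Y$ changes nothing precisely because idempotency is stable under transposition.
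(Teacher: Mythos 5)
Your proposal is correct and takes essentially the same approach as the paper: the paper's own (very terse) proof likewise rests on the observation that replacing $A,X,Y$ by their transposes and reversing products leaves the condition of Theorem \ref{lsr1} unchanged, using commutativity and the standard determinant/trace identities, with idempotency preserved under transposition. Your write-up merely makes explicit the bookkeeping the paper leaves implicit, namely that right isr1 of $A$ is equivalent to left isr1 of $A^{T}$ via the transposition anti-automorphism, and then verifies the invariance $\Phi(A^{T},X^{T},Y^{T})=\Phi(A,X,Y)$ term by term.
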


\begin{proof}
Using the properties of determinants, the properties of the trace and the
commutativity of the base ring, it is readily seen that changing $A,X,Y$
into transposes and reversing the order of the products does not change the
condition in the previous theorem.
\end{proof}

\bigskip

We have also proved (see \cite{ca-po}) that a matrix $A\in \mathbb{M}_{2}(%
\mathbb{Z})$ has sr1 iff $\det A\in \{-1,0,1\}$. Since $\det (A)\in \{\pm
1\} $ yield precisely the units, in order to determine the idempotent sr1
integral $2\times 2$ matrices, it remains to deal with (nonunits in) the
case $\det (A)=0$.

As our first main result we have the following characterization

\begin{theorem}
\label{unu}A noninvertible $2\times 2$ integral matrix $A$ has idempotent
sr1 iff $\det (A)=0$, the entries of $A$ are (setwise) coprime and there
exists a nontrivial idempotent $E$ such that $\mathrm{Tr}(AE)\in \{\pm 1\}$.
If $A=\left[ 
\begin{array}{cc}
a_{11} & a_{12} \\ 
a_{21} & a_{22}%
\end{array}%
\right] $, the last conditions are equivalent to the existence of integers $%
x,y,z$ such that $-a_{11}(1-x)+a_{12}z+a_{21}y-a_{22}x\in \{\pm 1\}$ and $%
x(1-x)=yz$.
\end{theorem}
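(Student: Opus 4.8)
The plan is to push everything through the specialization of Theorem~\ref{lsr1} obtained by taking $Y=E$ idempotent, using that over $\mathbb{Z}$ the only units are $\pm 1$ and that a nontrivial $2\times 2$ idempotent has trace $1$ and determinant $0$. First I would settle the determinant clause: since idempotent sr1 implies sr1, the quoted fact that an integral matrix has sr1 exactly when $\det A\in\{-1,0,1\}$ applies, and noninvertibility forces $\det A=0$. This immediately kills the two terms of the characterizing expression that carry a factor $\det(A)$, leaving
\[
F(X,E)=\det(E)\bigl(1-\mathrm{Tr}(XA)\bigr)-\mathrm{Tr}\bigl(A\,\mathrm{adj}(E)\bigr),
\]
which must be a unit, for every $X$, for some idempotent $E$.

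Next I would split according to the type of $E$. For $E=0_2$ one gets $F=0$, never a unit; for $E=I_2$ one gets $1-\mathrm{Tr}(XA)-\mathrm{Tr}(A)$, which genuinely depends on $X$. The decisive observation is that for a nontrivial idempotent $E$ we have $\det E=0$ and $\mathrm{adj}(E)=\mathrm{Tr}(E)I_2-E=I_2-E$, so $F(X,E)=-\mathrm{Tr}\bigl(A(I_2-E)\bigr)=\mathrm{Tr}(AE)-\mathrm{Tr}(A)$, which no longer involves $X$. Writing $E'=I_2-E$ (again a nontrivial idempotent) this is $-\mathrm{Tr}(AE')$. Hence, if some nontrivial idempotent $E'$ satisfies $\mathrm{Tr}(AE')\in\{\pm 1\}$, the single idempotent $E=I_2-E'$ makes $F(X,E)$ a unit simultaneously for every $X$, which is precisely idempotent sr1. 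This gives the sufficiency direction.

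For the converse I would argue by contraposition: assuming no nontrivial idempotent $E'$ has $\mathrm{Tr}(AE')\in\{\pm 1\}$, every nontrivial $E$ yields $F(X,E)=-\mathrm{Tr}(AE')\notin\{\pm 1\}$ and $E=0_2$ yields $0$, so only $E=I_2$ could ever produce a unit. But $\mathrm{Tr}(XA)=\sum_{i,j}X_{ij}a_{ji}$ runs over the whole ideal $d\mathbb{Z}$, where $d$ is the gcd of the entries of $A$; for $A\neq 0_2$ this is infinite, so $1-\mathrm{Tr}(XA)-\mathrm{Tr}(A)$ escapes $\{\pm 1\}$ for a suitable $X$, and idempotent sr1 fails. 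I expect this to be the main obstacle, since it is where the universal quantifier over $X$ has to be defeated; the degenerate matrix $A=0_2$ (excluded anyway by the coprimality clause, its entries not being setwise coprime) is the single case where this argument does not bite and must be set aside. The coprimality condition itself then comes for free, since $\mathrm{Tr}(AE)$ is a $\mathbb{Z}$-linear combination of the entries of $A$, so $\mathrm{Tr}(AE)\in\{\pm 1\}$ forces $d=1$.

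Finally I would make the condition explicit. A nontrivial idempotent can be written $E=\left[\begin{smallmatrix} x & y \\ z & 1-x\end{smallmatrix}\right]$ with $x(1-x)=yz$; these two relations say exactly $\mathrm{Tr}(E)=1$ and $\det E=0$, which by Cayley--Hamilton are equivalent to $E^2=E$. Substituting into $-\mathrm{Tr}\bigl(A\,\mathrm{adj}(E)\bigr)$ and simplifying gives $-a_{11}(1-x)+a_{12}z+a_{21}y-a_{22}x$, so requiring this to lie in $\{\pm 1\}$ for some integers $x,y,z$ with $x(1-x)=yz$ is precisely the asserted reformulation of the existence of a nontrivial idempotent $E$ with $\mathrm{Tr}(AE)\in\{\pm 1\}$.
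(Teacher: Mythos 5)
Your proof is correct and follows essentially the same route as the paper's: reduce to $\det(A)=0$ via the sr1 criterion, specialize Theorem~\ref{lsr1} to idempotent $Y$, and observe that for a nontrivial idempotent the characterizing expression equals $-\mathrm{Tr}(A\,\mathrm{adj}(Y))$, hence is independent of $X$, while $0_2$ and $I_2$ cannot serve as unitizers for all $X$. The only differences are that you spell out two points the paper leaves implicit: why $I_2$ fails for some $X$ (the image of $X\mapsto\mathrm{Tr}(XA)$ is the infinite ideal $d\mathbb{Z}$ when $A\neq 0_2$), and the degenerate case $A=0_2$, which you rightly flag as the one matrix the argument must set aside.
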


\begin{proof}
Since sr1 integral matrices $A$ have $\det (A)\in \{-1,0,1\}$ and we have
excluded the units, $\det (A)=0$ is necessary. The condition in the previous
characterization becomes: \ for every $X=\left[ 
\begin{array}{cc}
a & b \\ 
c & d%
\end{array}%
\right] $, there is $Y=\left[ 
\begin{array}{cc}
x & y \\ 
z & t%
\end{array}%
\right] =Y^{2}$ such that 
\begin{equation*}
\det (Y)(-\mathrm{Tr}(XA)+1)-\mathrm{Tr}(A\mathrm{adj}(Y))\in \{\pm 1\}.
\end{equation*}%
Over $\mathbb{Z}$, any (idempotent) unitizer is $Y=\left[ 
\begin{array}{cc}
x & y \\ 
z & 1-x%
\end{array}%
\right] $ with $x(1-x)=yz$ or else $Y\in \{0_{2},I_{2}\}$.

The unitizer cannot be $0_{2}$ (just by replacement) and it could be $I_{2}$
whenever $1-\mathrm{Tr}(XA)-\mathrm{Tr}(A)\in \{\pm 1\}$. In the latter
case, since for a given matrix $A$, $1-\mathrm{Tr}(XA)-\mathrm{Tr}(A)\in
\{\pm 1\}$ cannot hold for all $X\in \mathbb{M}_{2}(\mathbb{Z})$, for all
the other $X$, if $A$ is indeed isr1, $Y$ must be a nontrivial idempotent.

Hence, we can assume $\det (Y)=0$ and $\mathrm{Tr}(Y)=1$, and so $A$ has
isr1 precisely when there are integers $x,y,z$ such that $-\mathrm{Tr}(A%
\mathrm{adj}(Y))\in \{\pm 1\}$, that is $E=\mathrm{adj}(Y)$. Notice that $%
\det (Y)=\det (\mathrm{adj}(Y))$ and $\mathrm{Tr}(Y)=\mathrm{Tr}(\mathrm{adj}%
(Y))$, both are idempotent or not.

Therefore (surprisingly) the unitizer is independent of $X$ and the
condition amounts to: a given matrix $A=\left[ 
\begin{array}{cc}
a_{11} & a_{12} \\ 
a_{21} & a_{22}%
\end{array}%
\right] $ has isr1 iff there exist integers $x,y,z$ such that $%
-a_{11}(1-x)+a_{12}z+a_{21}y-a_{22}x\in \{\pm 1\}$ and $x(1-x)=yz$.

Such integers (and the corresponding unitizer) exist iff the entries of $A$
are (setwise) coprime and the coefficients in the linear combination give $%
\det (Y)=0$ and $\mathrm{Tr}(Y)=1$, as stated.
\end{proof}

In the sequel, we discuss several special cases.

\begin{corollary}
A $2\times 2$ integral matrix $A$, with 3 zero entries has idempotent sr1
iff the nonzero entry is $\pm 1$.
\end{corollary}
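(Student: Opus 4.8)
The plan is to read the corollary directly off Theorem \ref{unu}, observing that its coprimality clause already pins down the nonzero entry. First I would record that a matrix $A$ with three zero entries automatically satisfies $\det(A) = a_{11}a_{22} - a_{12}a_{21} = 0$, since each of the two products $a_{11}a_{22}$ and $a_{12}a_{21}$ has at least one zero factor. In particular $A$ is noninvertible, so Theorem \ref{unu} applies.

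Next, writing $n$ for the single nonzero entry, the set of entries of $A$ is $\{n,0,0,0\}$, whose greatest common divisor is $|n|$. Hence the entries are setwise coprime iff $|n| = 1$, i.e. $n \in \{\pm 1\}$. Since setwise coprimality is necessary for isr1 by Theorem \ref{unu}, this already settles the forward implication: an isr1 matrix with three zero entries must have its nonzero entry equal to $\pm 1$.

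For the converse I would verify the remaining clause of Theorem \ref{unu}, the existence of a nontrivial idempotent $E$ with $\mathrm{Tr}(AE) \in \{\pm 1\}$. Writing $A = n E_{kl}$ for the position $(k,l)$ of the nonzero entry, a one-line computation gives $\mathrm{Tr}(AE) = n\,(E)_{lk}$, so it suffices to exhibit a nontrivial idempotent whose $(l,k)$ entry is $\pm 1$ (recall $n = \pm 1$). When $k = l$ the standard idempotent $E = E_{kk}$ works; when $k \ne l$ the matrix $E = E_{ll} + E_{lk}$ is idempotent (a direct check of $E^{2}=E$, using $E_{lk}E_{ll}=E_{lk}E_{lk}=0$ for $k\ne l$) and has $(l,k)$ entry equal to $1$. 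In either case $\mathrm{Tr}(AE) = n = \pm 1$, which together with $\det(A)=0$ and coprimality confirms that $A$ has isr1.

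I do not expect a genuine obstacle: the whole point is that the \emph{setwise coprime} condition in Theorem \ref{unu} carries the necessity direction by itself, and the idempotent condition is trivially arrangeable once the nonzero entry is a unit. The only care needed is the bookkeeping over the four possible positions of the nonzero entry, which the uniform formula $\mathrm{Tr}(AE) = n\,(E)_{lk}$ collapses to the two cases (diagonal and off-diagonal) treated above.
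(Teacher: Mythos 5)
Your proof is correct and follows essentially the same route as the paper: necessity is read off the setwise-coprimality clause of Theorem \ref{unu}, and sufficiency is settled by exhibiting explicit nontrivial idempotents realizing $\mathrm{Tr}(AE)\in\{\pm 1\}$. The only cosmetic difference is that you work directly with the matrix $E$ appearing in the trace condition (via the uniform formula $\mathrm{Tr}(AE)=n\,(E)_{lk}$), whereas the paper lists the corresponding unitizers $Y$ (with $E=\mathrm{adj}(Y)$) case by case; the two bookkeepings are equivalent since the adjugate preserves nontrivial idempotents.
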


\begin{proof}
One way, just notice that, for an integer $n$, $\{n,0,0,0\}$ are (setwise)
coprime iff $n\in \{\pm 1\}$. Conversely, it is easily checked that: $E_{22}$
is an (idempotent) unitizer for $E_{11}$ and vice-versa, $E_{11}+E_{21}$ is
an (idempotent) unitizer for $E_{12}$ and $E_{12}+E_{22}$ is an (idempotent)
unitizer for $E_{21}$.
\end{proof}

The section begun by giving a direct (from definition) proof for $%
isr1(2E_{11})\neq 1$. Now this also follows from the previous corollary.

\begin{corollary}
Idempotent $2\times 2$ integral matrices have idempotent sr1. Only the
nilpotent matrices similar to $\pm E_{12}$ have idempotent sr1.
\end{corollary}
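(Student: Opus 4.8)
The plan is to prove the two assertions separately, each time reducing to Theorem~\ref{unu} and to the corollary on matrices with three zero entries. For the idempotent claim I would first dispose of the trivial cases: $0_{2}$ has isr1 by taking the unitizer $I_{2}$, and $I_{2}$ is a unit, so both are covered by the remarks in the Introduction. For a nontrivial idempotent $A$ one has $\det(A)=0$, and since the eigenvalues of an idempotent lie in $\{0,1\}$ while $A\neq 0_{2},I_{2}$, necessarily $\mathrm{Tr}(A)=a_{11}+a_{22}=1$; this already forces $\gcd(a_{11},a_{22})=1$, so the entries are setwise coprime. To supply the nontrivial idempotent demanded by Theorem~\ref{unu} I would simply take $E=A$, for then $\mathrm{Tr}(AE)=\mathrm{Tr}(A^{2})=\mathrm{Tr}(A)=1\in\{\pm 1\}$. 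All three conditions of Theorem~\ref{unu} thus hold and $A$ has isr1.

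For the nilpotent claim, note that a nonzero nilpotent $N$ satisfies $N^{2}=0$, hence $\mathrm{Tr}(N)=\det(N)=0$. The sufficiency direction is short: $\pm E_{12}$ has isr1 by the corollary on matrices with three zero entries, and conjugating an idempotent unitizer for $\pm E_{12}$ by any $P\in\mathrm{GL}_{2}(\mathbb{Z})$ yields an idempotent unitizer for $P(\pm E_{12})P^{-1}$; hence every $N$ similar to $\pm E_{12}$ has isr1 as well (concretely one transports the explicit unitizer $E_{11}+E_{21}$).

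For necessity I would invoke Theorem~\ref{unu}: if the nonzero nilpotent $N$ has isr1, then its entries must be setwise coprime, i.e.\ their gcd equals $1$. It then remains to show that a nilpotent of gcd $1$ is $\mathrm{GL}_{2}(\mathbb{Z})$-conjugate to $\pm E_{12}$. Since $\ker N$ is a rank-one saturated submodule of $\mathbb{Z}^{2}$, I would choose a primitive generator $v$, extend it to a basis $\{v,w\}$ of $\mathbb{Z}^{2}$, and use $\mathrm{im}\,N\subseteq\ker N$ (which follows from $N^{2}=0$) to write $N$ in this basis as $kE_{12}$ for some integer $k$. Because left and right multiplication by $\mathrm{GL}_{2}(\mathbb{Z})$ does not change the ideal generated by the entries, conjugation preserves their gcd, so $|k|=1$ and $N$ is similar to $\pm E_{12}$.

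The step I expect to be the real obstacle is precisely this integral classification: over $\mathbb{Q}$ every nonzero nilpotent is similar to $E_{12}$, but over $\mathbb{Z}$ the gcd of the entries is a genuine conjugacy invariant, and it is exactly this invariant that Theorem~\ref{unu} pins down. I would also be careful about the degenerate matrix $0_{2}$, which is simultaneously nilpotent and idempotent and has isr1 trivially though it is not similar to $\pm E_{12}$; the nilpotent statement is therefore to be read for nonzero nilpotents, the zero matrix being already accounted for by the idempotent assertion.
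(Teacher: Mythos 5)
Your proof is correct, and in both halves it takes a genuinely different route from the paper's. For the idempotent half, the paper argues that every nontrivial idempotent is similar to $E_{11}$ and then uses invariance of isr1 under conjugation; you instead verify the three conditions of Theorem~\ref{unu} directly, with $A$ serving as its own unitizer: $\det(A)=0$, the entries are setwise coprime because their gcd divides $\mathrm{Tr}(A)=a_{11}+a_{22}=1$, and $\mathrm{Tr}(A\cdot A)=\mathrm{Tr}(A)=1$. This is slicker, since it needs no classification of idempotents up to similarity. For the nilpotent half, both arguments ultimately rest on the same fact --- a nonzero integral nilpotent is similar to $kE_{12}$, where $|k|$ is the gcd of its entries --- but the paper proves it by explicit arithmetic (writing the nilpotent with trace zero and $x^{2}+yz=0$, setting $d=\gcd(x,y)$ and extracting the divisibility $y_{1}\mid d$), whereas you prove it structurally: a primitive generator of the saturated rank-one submodule $\ker N$ extends to a basis of $\mathbb{Z}^{2}$, and $\mathrm{im}\,N\subseteq \ker N$ puts $N$ in the form $kE_{12}$ in that basis; the coprimality supplied by Theorem~\ref{unu} together with the conjugation-invariance of the entry gcd then forces $|k|=1$. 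The paper's computation is more elementary and self-contained; your argument makes the conjugacy invariant conceptually visible and works verbatim over any principal ideal domain. Finally, you are right to flag $0_{2}$: it is nilpotent, has isr1, and is not similar to $\pm E_{12}$, so the second sentence of the corollary must indeed be read as a statement about nonzero nilpotents --- a caveat the paper's proof passes over silently.
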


\begin{proof}
The trivial idempotents are known to have idempotent sr1. Every nontrivial
idempotent is similar (conjugate) to $E_{11}$, so has idempotent sr1. As for
nilpotents, recall that every nilpotent matrix is similar to a multiple of $%
E_{12}$. Only those which are similar to $\pm E_{12}$ have idempotent sr1.
Indeed, let $T=\left[ 
\begin{array}{cc}
x & y \\ 
z & -x%
\end{array}%
\right] $ with $x^{2}+yz=0$ be any nilpotent matrix and let $d=\gcd (x;y)$.
Denote $x=dx_{1}$, $y=dy_{1}$ with $\gcd (x_{1};y_{1})=1$. Then $%
d^{2}x_{1}^{2}=-dy_{1}z$ and since $\gcd (x_{1};y_{1})=1$ implies $\gcd
(x_{1}^{2};y_{1})=1$, it follows $y_{1}$ divides $d$. If $d=y_{1}y_{2}$ then 
$T$ is similar to $y_{2}E_{12}$.
\end{proof}

\textbf{Examples}. 1) $2E_{12}$ is not similar to any of $\pm E_{12}$.
Indeed, if $(2E_{12})U=\pm UE_{12}$, for some $U$, then three entries of $U$
vanish, so $U$ cannot be a unit. Hence $isr(2E_{12})\neq 1$.

2) In the case of nilpotents, $\left[ 
\begin{array}{cc}
6 & 3 \\ 
-12 & -6%
\end{array}%
\right] $ is similar to $3E_{12}$ and so has \emph{not} idempotent sr1, but $%
\left[ 
\begin{array}{cc}
3 & 9 \\ 
-1 & -3%
\end{array}%
\right] $ is similar to $E_{12}$ and so has idempotent sr1.

\bigskip

In what follows we specialize our characterization Theorem \ref{unu} to the
case of matrices with zero second row.

The case we deal with are matrices of form $\left[ 
\begin{array}{cc}
a & b \\ 
0 & 0%
\end{array}%
\right] $, with nonzero coprime integers $a,b$ (the case with three zeros
was already settled). Notice that we can suppose both $a,b$ being positive.

Indeed, conjugation with $\left[ 
\begin{array}{cc}
1 & 0 \\ 
0 & -1%
\end{array}%
\right] $, transforms $\left[ 
\begin{array}{cc}
a & b \\ 
0 & 0%
\end{array}%
\right] $ into $\left[ 
\begin{array}{cc}
a & -b \\ 
0 & 0%
\end{array}%
\right] $, and, we pass from $\left[ 
\begin{array}{cc}
a & b \\ 
0 & 0%
\end{array}%
\right] $ to $\left[ 
\begin{array}{cc}
-a & b \\ 
0 & 0%
\end{array}%
\right] $ by rewriting the B\'{e}zout identity $ax+bz=1$ as $(-a)(-x)+bz=1$.

Such special matrices were studied with respect to cleanness in \cite{KL}.
Two consecutive reductions were made there: from $a<b$ to $a>b$, and then
from $a>b$ to $a\geq 2b$. We first show that these transformations can be
performed also for idempotent sr1 matrices.

\begin{lemma}
Suppose $a<b$ are coprime positive integers and $q\in \mathbb{Z}$. Then $%
\left[ 
\begin{array}{cc}
a & b \\ 
0 & 0%
\end{array}%
\right] $ has idempotent sr1 iff $\left[ 
\begin{array}{cc}
a & b-qa \\ 
0 & 0%
\end{array}%
\right] $ has idempotent sr1.
\end{lemma}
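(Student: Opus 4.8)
The plan is to reduce the asserted equivalence, through the characterization of Theorem~\ref{unu}, to a statement about idempotents alone, deliberately avoiding any (false) appeal to equivalence-invariance. Write $A$ for the matrix with zero second row, first column $(a,0)^{\mathrm{T}}$ and $(1,2)$-entry $b$, and $A'$ for the one with $(1,2)$-entry $b-qa$. Both have $\det=0$, and their entry sets $\{a,b,0,0\}$ and $\{a,b-qa,0,0\}$ share the same gcd because $\gcd(a,b-qa)=\gcd(a,b)=1$; thus both are setwise coprime. By Theorem~\ref{unu}, the only clause that can distinguish $A$ from $A'$ is the existence of a nontrivial idempotent $E$ with $\mathrm{Tr}(AE)\in\{\pm1\}$ (respectively $\mathrm{Tr}(A'E')\in\{\pm1\}$), so the lemma reduces to matching such idempotents for the two matrices.

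First I would record the trace explicitly. For any $E=\left[\begin{smallmatrix}e_{11}&e_{12}\\e_{21}&e_{22}\end{smallmatrix}\right]$, the vanishing of the second row of $A$ makes $AE$ have a zero second row, whence $\mathrm{Tr}(AE)=ae_{11}+be_{21}$, and likewise $\mathrm{Tr}(A'E')=ae'_{11}+(b-qa)e'_{21}$.

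Next I would introduce the unimodular $P=\left[\begin{smallmatrix}1&q\\0&1\end{smallmatrix}\right]$, for which $A'=AP^{-1}$, and the conjugation $E\mapsto E':=PEP^{-1}$. Since $P,P^{-1}\in\mathbb{M}_2(\mathbb{Z})$, this map sends integral idempotents to integral idempotents, fixes $0_2$ and $I_2$, and restricts to a bijection of the nontrivial idempotents, its inverse given by replacing $q$ by $-q$. The crux is the identity $\mathrm{Tr}(A'E')=\mathrm{Tr}(AE)$: from $\mathrm{Tr}(A'E')=\mathrm{Tr}(AP^{-1}\,PEP^{-1})=\mathrm{Tr}(AEP^{-1})$ and the fact that $AE$ has zero second row, right multiplication by the upper-unitriangular $P^{-1}$ leaves the $(1,1)$-entry unchanged and keeps the $(2,2)$-entry zero, so the trace is unaffected. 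Equivalently, at the level of entries $e'_{11}=e_{11}+qe_{21}$ and $e'_{21}=e_{21}$, and substitution collapses $ae'_{11}+(b-qa)e'_{21}$ to $ae_{11}+be_{21}$.

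Combining these, $A$ admits a nontrivial idempotent $E$ with $\mathrm{Tr}(AE)\in\{\pm1\}$ if and only if $A'$ admits the nontrivial idempotent $PEP^{-1}$ with the same trace value; by Theorem~\ref{unu} this is exactly the claim that $A$ has idempotent sr1 iff $A'$ does. I expect the main obstacle to be conceptual rather than computational: one must resist deducing the result from the right-equivalence $A'=AP^{-1}$ together with invariance of sr1 under equivalence, since idempotent sr1 is \emph{not} equivalence-invariant, as the excerpt stresses. The genuine content is that conjugation of idempotents, combined with the special zero-row shape of $A$ that forces $\mathrm{Tr}(AEP^{-1})=\mathrm{Tr}(AE)$, produces the required idempotent directly and integrally, the conjugation framing automatically bypassing any divisibility worry about the off-diagonal entry of $E'$.
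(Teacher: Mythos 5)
Correct, and essentially the paper's own proof: the paper likewise reduces via Theorem~\ref{unu} to transporting a nontrivial idempotent, completing the column $(x+qz,\,z)^{\mathrm{T}}$ to an idempotent $E'$ whose $(1,2)$-entry is $y+q-2xq-q^{2}z$ --- which is precisely your $PEP^{-1}$ written out entrywise. Your conjugation packaging is marginally cleaner, since idempotency and nontriviality of $E'$ come for free and the trace identity replaces the entry computation (incidentally, it exposes a sign typo in the paper's formula, which prints $+q^{2}z$ where the idempotency condition $x'(1-x')=y'z'$ forces $-q^{2}z$), but the underlying construction is the same.
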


\begin{proof}
One way is obvious (take $q=0$). Conversely, suppose $ax+bz=1$. Then $%
a(x+qz)+(b-qa)z=1$ so $a,b-qa$ are also coprime. Both matrices (we denote
these $A$ and $A^{\prime }$) have zero determinant so, according to Theorem %
\ref{unu}, if there is a nontrivial idempotent with $\mathrm{Tr}(AE)\in
\{\pm 1\}$, we have to indicate a nontrivial idempotent $E^{\prime }$ such
that $\mathrm{Tr}(A^{\prime }E^{\prime })\in \{\pm 1\}$. This amounts to
complete the column $\left[ 
\begin{array}{c}
x+qz \\ 
z%
\end{array}%
\right] $, to the right, up to a nontrivial idempotent. Set $E=\left[ 
\begin{array}{cc}
x & y \\ 
z & 1-x%
\end{array}%
\right] $ with $x(1-x)=yz$. Then $E^{\prime }=\left[ 
\begin{array}{cc}
x+qz & y+q-2xq+q^{2}z \\ 
z & 1-(x+qz)%
\end{array}%
\right] $ yields $\mathrm{Tr}(A^{\prime }E^{\prime })\in \{\pm 1\}$, as
desired.
\end{proof}

As our second main result, we are now in position to prove

\begin{theorem}
\label{doi}For any coprime nonzero integers $a,b$ and $A=\left[ 
\begin{array}{cc}
a & b \\ 
0 & 0%
\end{array}%
\right] $ the following conditions are equivalent;

(i) $A$ has idempotent sr1;

(ii) $A$ is clean.
\end{theorem}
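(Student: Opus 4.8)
The plan is to prove the two implications separately, with (i)$\Rightarrow$(ii) being essentially free and (ii)$\Rightarrow$(i) carrying all the content. For (i)$\Rightarrow$(ii), I would simply invoke the general fact recorded in the Introduction that isr1 elements are always clean (take $x=0$ in the definition); nothing specific to $A$ is needed. So the real work is the reverse implication.

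For (ii)$\Rightarrow$(i), I would first translate both properties into conditions on a nontrivial idempotent $E=\left[\begin{smallmatrix} x & y \\ z & 1-x\end{smallmatrix}\right]$ with $x(1-x)=yz$. Since $\gcd(a,b)=1$ makes the entries of $A$ setwise coprime and $\det(A)=0$, Theorem \ref{unu} says that $A$ has isr1 exactly when there is such an $E$ with $\mathrm{Tr}(AE)\in\{\pm1\}$, and a direct computation gives $\mathrm{Tr}(AE)=ax+bz$. On the cleanness side, writing $A=E+U$ with $U=A-E$ a unit, a direct computation (or the $2\times2$ identity $\det(A-E)=\det A-\mathrm{Tr}(A\,\mathrm{adj}(E))+\det E$ together with $\det A=\det E=0$ and $\mathrm{adj}(E)=I_2-E$) gives $\det(A-E)=\mathrm{Tr}(AE)-a$. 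Hence $A$ is clean through a nontrivial idempotent exactly when $\mathrm{Tr}(AE)\in\{a-1,a+1\}$. I would then dispatch the trivial idempotents: $E=0_2$ gives $\det A=0$ and never witnesses cleanness, while $E=I_2$ forces $\det(A-I_2)=1-a\in\{\pm1\}$, i.e. $a=2$.

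The key step is the observation that passing from $E$ to its complement $I_2-E$, which is again a nontrivial idempotent, sends $\mathrm{Tr}(AE)$ to $\mathrm{Tr}(A(I_2-E))=\mathrm{Tr}(A)-\mathrm{Tr}(AE)=a-\mathrm{Tr}(AE)$. This map carries the clean-witnessing set $\{a-1,a+1\}$ precisely onto the isr1-witnessing set $\{1,-1\}$. Therefore, if $A$ is clean through a nontrivial idempotent $E$, then $I_2-E$ witnesses isr1 and we are finished. The only remaining possibility is cleanness through $E=I_2$, that is $a=2$; here coprimality forces $b$ odd, so $x=(1-b)/2$ is an integer, and the nontrivial idempotent with this $x$, $z=1$, $y=x(1-x)$ satisfies $\mathrm{Tr}(AE)=2x+b=1$, giving isr1 directly.

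I expect the main obstacle to be spotting and justifying the complementary-idempotent bijection and then handling the bookkeeping of the trivial idempotents cleanly: one must verify that $E=0_2$ never certifies cleanness, that $E=I_2$ does so only when $a=2$, and that this exceptional value $a=2$ is itself isr1, so that no clean matrix escapes the complement argument. If one prefers to lean on \cite{KL} instead, the reduction Lemma just established would let one import the known cleanness criteria there, but the complement argument above is self-contained given Theorem \ref{unu}.
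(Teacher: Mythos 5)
Your proposal is correct, but it takes a genuinely different route from the paper's proof. The paper proves (ii)$\Rightarrow$(i) by first normalizing to coprime positive integers with $a\geq 2b$ (via the preceding reduction Lemma and the sign conjugations), then quoting Khurana--Lam \cite{KL} for the criterion that such a matrix is clean iff $a\equiv \pm 1 \pmod{b}$, and finally exhibiting the explicit unitizer $\mathrm{adj}(Y)=\left[\begin{smallmatrix} 1 & 0 \\ z & 0 \end{smallmatrix}\right]$ when $a+bz=\pm 1$. You instead work directly with an arbitrary clean decomposition $A=E+U$: your identity $\det(A-E)=\mathrm{Tr}(AE)-a$ (valid because $\det A=\det E=0$ and $\mathrm{Tr}(E)=1$ for nontrivial $E$) shows that cleanness through a nontrivial idempotent means $\mathrm{Tr}(AE)\in\{a-1,a+1\}$, and the complementary idempotent $I_2-E$ then satisfies $\mathrm{Tr}(A(I_2-E))=a-\mathrm{Tr}(AE)\in\{\pm 1\}$, which is exactly the isr1 witness demanded by Theorem \ref{unu}; the trivial idempotents are dispatched correctly ($E=0_2$ is impossible since $\det A=0$, while $E=I_2$ forces $a=2$, a case you settle with the explicit idempotent built from $x=(1-b)/2$, $z=1$, using that coprimality makes $b$ odd). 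I verified the determinant identity and the trace bookkeeping, and they are right. Your argument buys self-containedness (no appeal to \cite{KL}, no Euclidean-type reduction, hence no case analysis on the sizes and signs of $a,b$) and exposes a clean conceptual symmetry: clean witnesses and isr1 witnesses for such matrices are exchanged by idempotent complementation $E\mapsto I_2-E$. The paper's route, in exchange, yields a concrete arithmetic test ($a\equiv\pm 1\pmod{b}$ after reduction) and reuses machinery (the reduction Lemma) that the paper exploits again in Section 3.
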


\begin{proof}
As already mentioned, we first show that there is no loss of generality in
working with coprime (positive) integers $a\geq 2b$.

Indeed, if $0<a<b$ and $b=qa+r$ is the division with quotient $q$ and
reminder $r$, we have $0<r<a$ and, using the previous lemma, this is the
passage from $a<b$ to $a>r$.

Next, suppose $b<a\leq 2b$. Then, using the previous lemma, we pass from $%
a>b $ to $a>b-a$ (here $b-a$ is negative), and finally we pass from $a>b$ to 
$a>a-b$. It just remains to notice that $2b\geq a$ is equivalent to $a\geq
2(a-b)$ and we are done.

Since clean matrices $A$ with $a\geq 2b$ were characterized in \cite{KL}, by 
$a\equiv \pm 1$ (mod $b$), it only remains to show that in this case $A$ has
also isr1. This follows from Theorem \ref{unu}: if $a+bz=\pm 1$ we take $%
\mathrm{adj}(Y)=\left[ 
\begin{array}{cc}
1 & 0 \\ 
z & 0%
\end{array}%
\right] $, that is, $Y=\left[ 
\begin{array}{cc}
0 & 0 \\ 
-z & 1%
\end{array}%
\right] $ and so $\mathrm{Tr}(A\mathrm{adj}(Y))=\pm 1$, as desired.
\end{proof}

\section{Details on unitizers}

We first recall briefly some details on the B\'{e}zout identity (over $%
\mathbb{Z}$, we discuss the solutions of a linear Diophantine equation with
coprime coefficients).

If $a,b$ are \emph{coprime positive integers} there exist integers $x_{0}$, $%
z_{0}$ such that $ax_{0}+bz_{0}=1$. The other solutions of the equation $%
ax+bz=1$ are $(x_{0}+kb,z_{0}-ka)$ for any integer $k$. Among these there
exist precisely 2 \emph{minimal} pairs $(x,z)$, such that $\left\vert
x\right\vert <b$, $\left\vert z\right\vert <a$. Clearly, for any solution, $x
$ and $z$ have opposite signs. Moreover, one minimal solution has $x<0$ and $%
z>0$ and the other has $x>0$ and $z<0$.

The next simple result will be used.

\begin{lemma}
\label{trei}Let $(x_{0},z_{0})$ be a given solution of the equation $ax+bz=1$
and $x=x_{0}+kb$, $z=z_{0}-ka$, the general solution. Then $z$ divides $x-1$
if there is an integer $l$ such that $(ak-z_{0})(al+b)=a-1$.
\end{lemma}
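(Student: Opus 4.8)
The plan is to translate the divisibility assertion $z\mid x-1$ directly into the displayed product identity by manipulating the Bézout relation $ax+bz=1$. The first thing I would record is the sign bookkeeping: since $z=z_{0}-ka=z_{0}-ak$, we have $ak-z_{0}=-z$, so the hypothesis $(ak-z_{0})(al+b)=a-1$ is nothing but $z(al+b)=1-a$ in disguise. This rewriting is the crux, because it converts the product into a linear expression in $z$ that can be matched against $x-1$.

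Next I would use $ax+bz=1$ to express $a(x-1)$ in a form free of the unknown $bz$: from $ax=1-bz$ one gets $a(x-1)=ax-a=(1-a)-bz$. Substituting the rewritten hypothesis $1-a=z(al+b)=zal+zb$ into this gives $a(x-1)=zal+zb-bz=azl$. Finally, because $a$ is a positive integer and hence nonzero, I would cancel $a$ from $a(x-1)=azl$ to conclude $x-1=zl$, which is precisely $z\mid x-1$, with quotient $l$.

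I do not expect a genuine obstacle here: the entire content is a single cancellation of the nonzero factor $a$, and the only point demanding a moment's care is the sign identity $ak-z_{0}=-z$ together with the correct substitution of $1-a$ into $(1-a)-bz$. I would also note that every step is reversible—starting from $x-1=zl$ and running the same chain backwards recovers $z(al+b)=1-a$ and hence $(ak-z_{0})(al+b)=a-1$—so the stated implication is in fact an equivalence, which is the form in which it will be applied.
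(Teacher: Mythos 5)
Your proof is correct and is essentially the paper's own computation run in the opposite direction: the paper starts from the divisibility $z\mid x-1$, written as $x_{0}-1+kb=(z_{0}-ka)l$, multiplies by $a$ and substitutes $ax_{0}+bz_{0}=1$ to arrive at $(ak-z_{0})(al+b)=a-1$, whereas you start from that identity (via the sign observation $ak-z_{0}=-z$) and cancel the nonzero factor $a$ to obtain $x-1=zl$. Since every step is reversible, the two arguments coincide, and your direction in fact matches the implication exactly as stated in the lemma.
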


\begin{proof}
The divisibility amounts to $x_{0}-1+kb=(z_{0}-ka)l$ for some integer $l$.
Multiplying by $a$, decomposing and using $ax_{0}+bz_{0}=1$ gives the
equality in the statement.
\end{proof}

Since $a-1$ has finitely many divisors, the equality cannot be satisfied for
every integer $k$, so even if $z_{0}$ divides $x_{0}-1$, not all $z$ divide $%
x-1$.

Clearly, a necessary condition for this divisibility is that $al+b$ divides $%
a-1$ for some integer $l$.

Also notice that we actually consider $ax+bz=\pm 1$, so the solutions $%
(x,z)\,$of $ax+bz=1$, but also $(-x,-z)$.

\bigskip

Next, in order to avoid the double reduction in Theorem \ref{doi} and to
have some direct proofs (of isr1) for matrices with zero second column, in
the next result we provide unitizers in \emph{all} the possible cases.

\begin{theorem}
Suppose $a,b$ are coprime positive integers and let $x,z$ be any solution to 
$ax+bz=1$. The matrix $A=\left[ 
\begin{array}{cc}
a & b \\ 
0 & 0%
\end{array}%
\right] $ with $a<b$ has idempotent sr1 iff $z$ divides $1-x$ or $1+x$. If $%
a>b$, $A$ has idempotent sr1 iff $x=1$ or else $\left\vert z\right\vert
=\left\vert 1\pm x\right\vert $.
\end{theorem}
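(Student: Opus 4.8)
The plan is to read the criterion straight off Theorem \ref{unu} and then re-express it through the B\'ezout equation $ax+bz=1$. Specialising Theorem \ref{unu} to $A=\left[\begin{smallmatrix} a & b \\ 0 & 0 \end{smallmatrix}\right]$, a nontrivial idempotent $E=\left[\begin{smallmatrix} x & y \\ z & 1-x \end{smallmatrix}\right]$ (so $x(1-x)=yz$) gives $\mathrm{Tr}(AE)=ax+bz$, so $A$ has idempotent sr1 iff such an $E$ exists with $ax+bz\in\{\pm 1\}$. The first thing I would record is that $ax+bz=\pm 1$ forces $\gcd(x,z)=1$, whence the idempotency relation $z\mid x(1-x)$ collapses to $z\mid 1-x$. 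Matching signs, the solutions of $ax+bz=1$ produce the requirement $z\mid 1-x$, while the solutions of $ax+bz=-1$ are exactly the negatives of these and produce $z\mid 1+x$. This yields the working reformulation: \emph{$A$ has idempotent sr1 iff some solution $(x,z)$ of $ax+bz=1$ satisfies $z\mid 1-x$ or $z\mid 1+x$} (the degenerate $z=0$ occurring only when $a=1$, which is immediate).

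Next I would pin this existential statement to a minimal pair, using Lemma \ref{trei}. Substituting $1-x=tz$ into $ax+bz=1$ gives $z(at-b)=a-1$, so $z\mid 1-x$ forces $z\mid a-1$ and hence $|z|\le a-1<a$; the analogous substitution shows $z\mid 1+x$ forces $z\mid a+1$, so $|z|\le a+1$. Because consecutive solutions differ by $a$ in the $z$-coordinate, the only pairs with $|z|<a$ are the two minimal ones; thus $z\mid 1-x$ can be witnessed only at a minimal pair, and in the single borderline case $|z|=a+1$ for $z\mid 1+x$ the neighbouring solution has $|z|=1$ and satisfies the divisibility trivially. Consequently the property is always detected at a minimal pair, which for $a<b$ is already the first assertion of the theorem.

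For $a>b$ I would exploit the tighter bound $|x|<b<a$ on a minimal pair, together with $ax+bz=1$, to force each divisibility quotient into $\{0,\pm 1\}$. Writing $b|z|=a|x|\pm 1$ (with the sign dictated by the opposite signs of $x$ and $z$) and comparing $|z|$ with $|1-x|$ and with $|1+x|$, one checks that for $a\ge 2b+1$ these targets are at most $|z|$, so a multiple of $z$ reaching them must be $0$ or $\pm z$: quotient $0$ gives $x=1$, quotient $\pm 1$ gives $|z|=|1\pm x|$. I expect the main obstacle to be the narrow window $b<a\le 2b$, where $|1+x|$ can exceed $|z|$ and the estimates are tight; there one must verify directly that a quotient of absolute value $\ge 2$ would contradict the integrality of $x$ or $z$, so it cannot actually occur. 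Collecting the two divisibility branches with the $x=1$ branch then yields the stated criterion ``$x=1$ or $|z|=|1\pm x|$'', completing the proof.
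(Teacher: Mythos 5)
Your first paragraph is exactly the paper's reduction: specialize Theorem \ref{unu} to $A$, note $\mathrm{Tr}(AE)=ax+bz$, observe that $ax+bz=\pm 1$ forces $\gcd(x,z)=1$ so the completion condition $z\mid x(1-x)$ collapses to $z\mid 1-x$ (and to $z\mid 1+x$ after the sign flip). Your second paragraph is in fact \emph{more} careful than the paper on a point the paper only confronts in the Remark after the theorem, namely the dependence on the choice of B\'{e}zout pair: the theorem's ``any solution'' wording cannot be read literally (for $(a,b)=(2,5)$ the solution $(13,-5)$ satisfies neither divisibility, although the matrix has isr1, witnessed by $(-2,1)$), and your bounds $z\mid a-1$, respectively $z\mid a+1$, which pin every witness to a minimal pair (or to a neighbour with $|z|=1$), are a genuine improvement over the paper's treatment.

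The gap is in your case $a>b$, in two places. First, the deferred claim that in the window $b<a\le 2b$ ``a quotient of absolute value $\ge 2$ would contradict the integrality of $x$ or $z$, so it cannot actually occur'' is false: for $a=b+1$ the solution $(x,z)=(1,-1)$ gives $1+x=2=(-2)z$, a quotient of $-2$. The theorem survives only because every such occurrence has $x=1$, so the first branch of the disjunction holds anyway; proving \emph{that} is the real content of the window case, and it is not an integrality contradiction. (In fact no window split is needed: for every solution with $a>b$ one has $|x|\le |z|$, with equality only at $(1,-1)$, $a=b+1$ --- this is the ``$|x|<|z|$'' the paper's proof invokes --- and this single inequality forces the quotient into $\{0,\pm 1\}$ outside that benign edge case.) Second, ``quotient $0$ gives $x=1$'' is correct only for the branch $z\mid 1-x$; for $z\mid 1+x$, quotient $0$ gives $x=-1$, and that case cannot be dropped: for $(a,b)=(14,5)$ the minimal pairs $(-1,3)$ and $(4,-11)$ satisfy neither ``$x=1$'' nor ``$|z|=|1\pm x|$'', yet the matrix has isr1, since $z=3$ divides $1+x=0$ (equivalently, $14\equiv -1\ (\mathrm{mod}\ 5)$, so it is clean and Theorem \ref{doi} applies). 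The paper's proof quietly corrects the criterion to ``$x\in\{\pm 1\}$ or $|z|=|1\pm x|$''; your outline, executed as written, proves the uncorrected and false version.
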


\begin{proof}
Suppose $a$ and $b$ are coprime positive integers and let $x,z$ be any
solution to $ax+bz=1$. As already seen before, for the conditions $\mathrm{Tr%
}(\mathrm{adj}(Y))=\pm 1$, we just have to complete the column $\left[ 
\begin{array}{c}
x \\ 
z%
\end{array}%
\right] $ to the right, up to a nontrivial idempotent $\left[ 
\begin{array}{cc}
x & y \\ 
z & 1-x%
\end{array}%
\right] $ together with $x(1-x)=yz$. Since $x$ and $z$ are coprime, this
amounts to some divisibilities.

\textbf{A}. Suppose $0<a<b$. Here $\left\vert x\right\vert >\left\vert
z\right\vert $, these have opposite signs and $z$ must divide $x-1$ or $x+1$
(as noticed, the solution $(-x,-z)$ is also suitable).

1) For $ax+bz=1$, suppose $z$ divides $x-1$, that is $x-1=kz$, for some
integer $k$. We take $\mathrm{adj}(Y)=\left[ 
\begin{array}{cc}
x & -kx \\ 
z & -kz%
\end{array}%
\right] $ for which $\det (\mathrm{adj}(Y))=0$ and $\mathrm{Tr}(\mathrm{adj}%
(Y))=1$ so $\mathrm{adj}(Y)$ is idempotent. So is $Y$ and since $A\mathrm{adj%
}(Y)=\left[ 
\begin{array}{cc}
1 & -k \\ 
0 & 0%
\end{array}%
\right] $, $\mathrm{Tr}(A\mathrm{adj}(Y))=1$ follows and we are done.

2) For $ax+bz=1$, suppose $z$ divides $x+1$, that is $x+1=kz$, for some
integer $k$. We take $\mathrm{adj}(Y)=\left[ 
\begin{array}{cc}
-x & kx \\ 
-z & kz%
\end{array}%
\right] $ for which $\det (\mathrm{adj}(Y))=0$ and $\mathrm{Tr}(\mathrm{adj}%
(Y))=1$ so $\mathrm{adj}(Y)$ is idempotent. So is $Y$ and since $A\mathrm{adj%
}(Y)=\left[ 
\begin{array}{cc}
-1 & k \\ 
0 & 0%
\end{array}%
\right] $, $\mathrm{Tr}(A\mathrm{adj}(Y))=-1$ follows, as desired.

\textbf{B}. If $a>b>0$ are coprime integers and $ax+bz=1$ then $\left\vert
x\right\vert <\left\vert z\right\vert $ and these have opposite signs ($%
x>0>z $ or else $x<0<z$). Here $x\in \{\pm 1\}$ or $\left\vert z\right\vert
=\left\vert 1\pm x\right\vert $.

1) If $x=1$ we have a unitizer indicated in the proof of Theorem \ref{doi}.
The case $x=-1$ is similar.

2) If $\left\vert z\right\vert =\left\vert 1\pm x\right\vert $, we find
unitizers as in the case \textbf{A} above.
\end{proof}

\bigskip

Related to the divisibilities above, we give the following

\textbf{Examples}. 1) For $(a,b)=(8,13)$ the minimal pairs are $(5,-3)$ and $%
(-8,5)$. For none $z$ divides $x-1$, but for $(5,-3)$, $-3$ divides $5+1$.

2) For $(15,23)$, the minimal pairs are $(-3,2)$ and $(20,-13)$. Here $2$
divides $-3-1$.

3) For $(5,7)$, the minimal pairs are $(3,-2)$ and $(-4,3)$. Here $-2$
divides $3-1$ and also $3$ divides $-4+1$.

4) For $(5,9)$, the minimal pairs are $(2,-1)$ and $(-7,4)$. Here $-1$
divides $2\pm 1$ and also $4$ divides $-7-1$.

\textbf{Nonexamples}. 1) For $(12,17)$ the minimal pairs are $(10,-7)$ and $%
(-7,5)$.

2) For $(12,19)$, the minimal pairs are $(8,-5)$ and $(-11,7)$.

3) For $(12,31)$, the minimal pairs are $(13,-5)$ and $(-18,7)$.

4) For $(13,18)$, the minimal pairs are $(7,-5)$ and $(-11,8)$.

5) For $(51,71)$, the minimal pairs are $(-32,23)$ and $(39,-28)$.

\bigskip

\textbf{Remark}. While for \emph{examples}, indicating a pair (minimal or
not) is sufficient in order to construct an idempotent unitizer, and so to
check the isr1 property, for the \emph{nonexamples} (as the referee pointed
out) a proof is necessary in order to show that if the conditions in the
previous Theorem are \emph{not} fulfilled for a given pair (e.g., a minimal
pair), these are \emph{not} fulfilled for any other solution of $ax+bz=1$.
This is clear in the \textbf{B} case and not hard to check in the \textbf{A}
case.

Recall that if $(x_{0},z_{0})$ is a (minimal) solution for $ax+bz=1$ then
the other solutions are given by $x=x_{0}+kb$, $z=z_{0}-ka$. Of course, it
would suffice to show that if $z_{0}$ does not divide $x_{0}-1$ then every $%
z $ does not divide the corresponding $x-1$.

Unfortunately this is not true in general as shows the following

\textbf{Example}. For $(a,b)=(2,5)$ a solution pair is $(-7,3)$ for which $3$
does not divide $-8=-7-1$. The general solution $x=-7+5k$, $z=3-2k$ gives
for $k=1$ the (minimal) solution $(-2,1)$ for which $1$ divides $-3=-2-1$.

Therefore, another type of verification is necessary for the nonexamples,
starting with a given pair (e.g., a minimal pair). The implication "$z_{0}$
divides $x_{0}-1$ then $z$ divides $x-1$" must be verified for each
nonexample, separately, and this amounts to solve a quadratic Diophantine
equation ! As seen in Lemma \ref{trei}, this equation is 
\begin{equation*}
(ak-z_{0})(al+b)=a-1
\end{equation*}%
with unknowns $k$, $l$. Especially when $a-1$ is a prime, it sometimes
suffices to check that $al+b$ does not divide $a-1$. This can be easily done
for examples (1)-(3).

\bigskip

Next, we reconsider example 4, from the nonexamples list above, and give all
details.

For $13x+18z=1$ the minimal pairs are $(x_{0},z_{0})\in \{(7,-5),(-11,8)\}$,
for which clearly $z_{0}$ does not divide $x_{0}\pm 1$.

It suffices to discuss one minimal pair, say the first pair, for which the
general solution is $x=7+18k$, $z=-5-13k$. Would $z$ divide $x-1$, then $%
6+18k=-(5+13k)l$ for some integer $l$.

As seen above, this quadratic Diophantine equation in the unknowns $k$, $l$
can be written $(13k+5)(13l+18)=12$. That this equation has no solutions can
be shown browsing the pairs of two integers whose product is $12$, or using
any software online (e.g., \cite{mat}).

If $z$ would divide $x+1$, similarly we reach the equation $8+18k=-(5+13k)l$%
, also with no solutions.

\bigskip

Notice that we used the minimal pairs just to keep the numbers low.

\bigskip

As applications of our results, here are some more

\textbf{Examples}. 1) We can show that $A=\left[ 
\begin{array}{cc}
5 & 12 \\ 
0 & 0%
\end{array}%
\right] $ \emph{has} idempotent sr1, in two different ways.

(i) The minimal solutions $(x,z)$ of $ax+bz=1$ are $(5,-2)$, $(-7,3)$ and $%
-2 $ divides $5-1$. Moreover $3$ divides $-7+1$. Therefore we indicate a
(nontrivial idempotent) unitizer as in \textbf{A}, above: $Y=\left[ 
\begin{array}{cc}
-4 & -10 \\ 
2 & 5%
\end{array}%
\right] $.

(ii) As described in the previous section, we pass ($12=2\cdot 5+2$) from $%
(5,12)$ to $(5,2)$ for which $5\equiv 1$ (mod $2$), so $A$ is clean and
idempotent sr1 (by Theorem \ref{doi}). Actually $\left[ 
\begin{array}{cc}
5 & 12 \\ 
0 & 0%
\end{array}%
\right] =\left[ 
\begin{array}{cc}
-4 & -10 \\ 
2 & 5%
\end{array}%
\right] +\left[ 
\begin{array}{cc}
9 & 22 \\ 
-2 & -5%
\end{array}%
\right] $ is its (uniquely) clean decomposition.

2) However, we can show that $B=\left[ 
\begin{array}{cc}
12 & 5 \\ 
0 & 0%
\end{array}%
\right] $ \emph{has not} idempotent sr1 in three different ways.

(i) Since $12>5$ and $12\geq 2\cdot 5$ we can use Theorem \ref{doi}: $%
12\not\equiv \pm 1$ (mod $5$) so $B$ is not clean (and so nor isr1).

(ii) For $X=0_{2}$, if a unitizer $Y$ exists, we would have $A-Y\in U(%
\mathbb{M}_{2}(\mathbb{Z}))$, that is, $A$ would be clean. We can show that
this fails (e.g. see \cite{cal}) solving the Diophantine equations $%
5x^{2}-12xy+5x\mp y=0$ together with $12x+5z=\pm 1$ (the only solutions are $%
(0,0),(-1,0)$, none verifies both equations), or else, using \cite{KL},
where this matrix and others are given examples of (unit-regular) matrices
which are not clean.

(iii) We are in the \textbf{B} case above. The minimal pairs for $(12,5)$
are $(x,z)\in \{(3,-7),(-2,5)\}$. For none $x\in \{\pm 1\}$ nor $\left\vert
z\right\vert =\left\vert 1\pm x\right\vert $, and this is sufficient
according to the previous remark.

\bigskip

\textbf{Remark}. The case when the first row is zero (or some column is
zero) reduces to the previous discussed case.

By conjugation with $U=E_{12}+E_{21}$, we check that $A=\left[ 
\begin{array}{cc}
a & b \\ 
0 & 0%
\end{array}%
\right] $ is similar to $A^{\prime }=\left[ 
\begin{array}{cc}
0 & 0 \\ 
b & a%
\end{array}%
\right] $, and $B=\left[ 
\begin{array}{cc}
b & a \\ 
0 & 0%
\end{array}%
\right] $ is similar to $B^{\prime }=\left[ 
\begin{array}{cc}
0 & 0 \\ 
a & b%
\end{array}%
\right] $. Both $A$, $A^{\prime }$ may have isr1 but $B$, $B^{\prime }$ may
not have isr1 (see previous example: $\left[ 
\begin{array}{cc}
5 & 12 \\ 
0 & 0%
\end{array}%
\right] $ and $\left[ 
\begin{array}{cc}
12 & 5 \\ 
0 & 0%
\end{array}%
\right] $). As for zero columns we just use the transpose.

\bigskip

In closing, we provide an example of two idempotent sr1 integral matrices
whose product has \emph{not} idempotent sr1.

\textbf{Example}. Take $A=\left[ 
\begin{array}{cc}
2 & 1 \\ 
0 & 0%
\end{array}%
\right] $. Since (by Theorem \ref{unu}), $\mathrm{adj}(Y)=\left[ 
\begin{array}{cc}
1 & 0 \\ 
-1 & 0%
\end{array}%
\right] $ gives a suitable (idempotent) unitizer, $A$ has isr1. However, $%
A^{2}=\left[ 
\begin{array}{cc}
4 & 2 \\ 
0 & 0%
\end{array}%
\right] $ has not idempotent sr1, because its entries are not (setwise)
coprime.

\begin{acknowledgement}
Thanks are due to the referee for careful reading and valuable suggestions
that improved our paper.
\end{acknowledgement}

\bigskip

\end{document}